\documentclass{amsart}
\usepackage{amssymb,amsthm}
\usepackage{hyperref}
\usepackage{graphicx}


\newtheorem{theorem}{Theorem}[section]
\newtheorem{lemma}[theorem]{Lemma}

\theoremstyle{definition}
\newtheorem{definition}[theorem]{Definition}

\newcommand{\cM}{\mathcal M}

\newcommand{\V}{\mathrm V}
\newcommand{\A}{\mathrm A}

\newcommand{\ZZ}{\mathbb Z}

\newcommand{\soc}{\mathrm{soc}}
\newcommand{\Aut}{\mathrm{Aut}}
\newcommand{\Cay}{\mathrm{Cay}}
\newcommand{\Sym}{\mathrm{Sym}}

\newcommand{\cD}{\mathcal D}

\makeatletter
\def\imod#1{\allowbreak\mkern10mu({\operator@font mod}\,\,#1)}
\makeatother

\begin{document}

\title[Groups generated by two elements]{Groups of order at most $6\,000$ generated by two elements, one of which is an involution, and related structures}

\author[P. Poto\v{c}nik]{Primo\v{z} Poto\v{c}nik}
\address{Primo\v{z} Poto\v{c}nik, Faculty of Mathematics and Physics,
 University of Ljubljana, Jadranska 19, 1000 Ljubljana, Slovenia}\email{primoz.potocnik@fmf.uni-lj.si}

\author[P. Spiga]{Pablo Spiga}
\address{Pablo Spiga, University of Milano-Bicocca, Dipartimento di Matematica Pura e Applicata, Via Cozzi 55, 20126 Milano Italy} \email{pablo.spiga@unimib.it}
 
\author[G. Verret]{Gabriel Verret}
\address{Gabriel Verret, Centre for Mathematics of Symmetry and Computation, The University of Western Australia, 35 Stirling Hwy, Crawley, WA 6009, Australia, and\newline FAMNIT, University of Primorska, Glagolja\v{s}ka 8, SI-6000 Koper, Slovenia} 
\email{gabriel.verret@uwa.edu.au}

\thanks{The first author is supported by Slovenian Research Agency, projects P1--0294, J1-5433, L1--4292, and J1-6720. The third author is supported by UWA as part of the Australian Research Council grant DE130101001.}

\subjclass[2010]{Primary 05E18; Secondary 20B25}
\keywords{$2$-generated groups, regular maps, chiral maps, Cayley graphs, arc-transitive digraphs}

\begin{abstract}
A \emph{$(2,*)$-group} is a group that can be generated by two elements, one of which is an involution. We describe the method we have used to produce a census of all $(2,*)$-groups of order at most $6\,000$. Various well-known combinatorial structures are closely related to $(2,*)$-groups and we also obtain censuses of these as a corollary.
\end{abstract}

\maketitle

\section{Introduction}
The objects that play a central role in our paper are \emph{$(2,*)$-groups}, that is, groups that can be generated by two (not necessarily distinct) elements, one of which is an involution. We will also need the notion of a $(2,*)$-triple, which we now define.

\begin{definition}
A {\em $(2,*)$-triple} is a triple $(G,x,g)$ such that $G$ is a $(2,*)$-group, $\{x,g\}$ is a generating set for $G$ and $x$ is an involution.
Two $(2,*)$-triples $(G_1,x_1,g_1)$ and $(G_2,x_2,g_2)$ are \emph{isomorphic} if there exists a group isomorphism from $G_1$ to $G_2$ mapping $x_1$ to $x_2$ and $g_1$ to $g_2$.	
\end{definition}

The first aim of this paper in to announce a complete determination of all $(2,*)$-groups of order at most $6\,000$. The methods we used and how they improve on the ones used by previous authors are discussed in Section~\ref{sec:comp}. Here, we only state the following:

\begin{theorem}
\label{thm:main2*}
Up to isomorphism, there are precisely $129\,340$ $(2,*)$-groups and $345\, 070$ $(2,*)$-triples of order at most $6\,000$.
\end{theorem}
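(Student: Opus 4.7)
The plan is to enumerate, for each positive integer $n\le 6\,000$, all isomorphism types of groups of order $n$, and for each such group $G$ to compute the number of $\mathrm{Aut}(G)$-orbits on the set
\[
\Omega(G)=\{(x,g)\in G\times G : |x|=2,\ \langle x,g\rangle = G\}.
\]
Since an isomorphism of $(2,*)$-triples $(G_1,x_1,g_1)\to(G_2,x_2,g_2)$ is in particular a group isomorphism $G_1\to G_2$, the number of isomorphism classes of $(2,*)$-triples with underlying group $G$ equals the number of $\mathrm{Aut}(G)$-orbits on $\Omega(G)$, and $G$ is a $(2,*)$-group exactly when $\Omega(G)\neq\emptyset$. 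Adding these quantities over all $G$ with $|G|\le 6\,000$ produces the two numbers in the statement.

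The primary source of groups is the \textsf{SmallGroups} library of \textsc{Gap}, which provides, up to isomorphism, all groups of each order $n\le 2000$ and of most orders up to $6\,000$. The missing orders, most notably $1024$, $1536$ and $2048$ together with a handful of orders whose $2$-part is large, must be handled separately: the $2$-power orders via the $p$-group generation algorithm (as implemented in the \textsf{ANUPQ} package), and the orders of the form $2^a\cdot m$ with $m$ odd via an analysis of the Sylow and Hall subgroups combined with an enumeration of the relevant extensions. For each $G$ thus produced one constructs $\mathrm{Aut}(G)$ together with its induced action on $G$ and on the set $I$ of involutions of $G$.

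For each representative $x$ of an $\mathrm{Aut}(G)$-orbit on $I$, the $\mathrm{Aut}(G)$-orbits on $\Omega(G)$ meeting $\{x\}\times G$ are in bijection with the $C_{\mathrm{Aut}(G)}(x)$-orbits on $\{g\in G:\langle x,g\rangle=G\}$, so the total number of $\mathrm{Aut}(G)$-orbits on $\Omega(G)$ equals
\[
\sum_{x}\bigl|\bigl\{C_{\mathrm{Aut}(G)}(x)\text{-orbits on }\{g\in G:\langle x,g\rangle=G\}\bigr\}\bigr|,
\]
where $x$ runs over one representative per $\mathrm{Aut}(G)$-orbit on $I$. Working one coordinate at a time in this way is essential for efficiency, since $|\mathrm{Aut}(G)|$ can be much larger than $|G|$.

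The main obstacle is computational. Computing $\mathrm{Aut}(G)$ and its action on $G$ is costly for the $2$-groups of orders $512$, $1024$ and $2048$, which dominate the range by sheer number of isomorphism types, and for soluble groups with very large outer automorphism groups. An efficient implementation should prune aggressively -- discarding pairs $(x,g)$ whose image in some characteristic quotient or in a maximal subgroup already shows that $\langle x,g\rangle\neq G$ -- and should use the orbit--stabiliser theorem to compute orbit sizes rather than build orbits by breadth-first search whenever possible. Finally, the output should be cross-checked by independently recomputing the numbers of the combinatorial structures (regular and chiral maps, arc-transitive Cayley digraphs, and so on) discussed later in the paper, and by comparison with previously published partial tabulations.
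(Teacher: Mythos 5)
Your reduction of the count to $\Aut(G)$-orbits on $\Omega(G)$, and the trick of working one coordinate at a time (orbit representatives $x$ on involutions first, then $C_{\Aut(G)}(x)$-orbits on the second coordinate), is exactly what the paper does once a candidate group $G$ is in hand. The fatal problem is where you propose to get the groups from. The paper discusses your approach explicitly (its Section~\ref{sec:database}) and rejects it: the \textsf{SmallGroups} strategy requires iterating over \emph{all} isomorphism types of each order up to $6\,000$, and this is impossible for the $2$-power orders you propose to handle ``separately''. There are more than $10^{15}$ pairwise non-isomorphic groups of order $2\,048$ (and on the order of $5\times 10^{10}$ of order $1\,024$), so no amount of pruning inside the orbit computation rescues a method whose outer loop has $10^{15}$ iterations; the $p$-group generation algorithm would generate these groups, but it does not make them few. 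The same objection applies, less dramatically, to orders such as $1\,536$, $3\,072$ and $4\,096$.

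What the paper does instead is never to enumerate all groups, but only candidates for $(2,*)$-groups. The key observation (Lemma~\ref{lem:rec}) is that a $(2,*)$-group either has trivial soluble radical --- and there are only $23$ groups of order at most $6\,000$ with trivial soluble radical, found via their socles and the list of small simple groups --- or is a direct elementary abelian extension of a \emph{smaller $(2,*)$-group or a cyclic group of odd order}. This yields an induction: for each already-constructed quotient $Q$, one computes the irreducible $\ZZ_pQ$-modules $N$ with $|N|\cdot|Q|\le 6\,000$ and the cohomology groups $H^2(Q,N)$, whose elements parametrise the direct elementary abelian extensions of $Q$ by $N$; one then keeps only those extensions that are again $(2,*)$-groups (or cyclic of odd order). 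Since only $129\,340$ groups survive in total, the outer loop is feasible; a further shortcut is that for $Q$ a $2$-group and $p=2$ the only irreducible module is the trivial one-dimensional one, which tames the hardest case. To repair your argument you must replace the ``source of groups'' paragraph with an inductive construction of this kind; everything downstream of it can stay.
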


The database of all $(2,*)$-groups and triples in a form readable by {\sc magma} \cite{magma} is available at \cite{PotWeb}.

\begin{figure}[hhh]
\begin{center}
\includegraphics[scale=0.45]{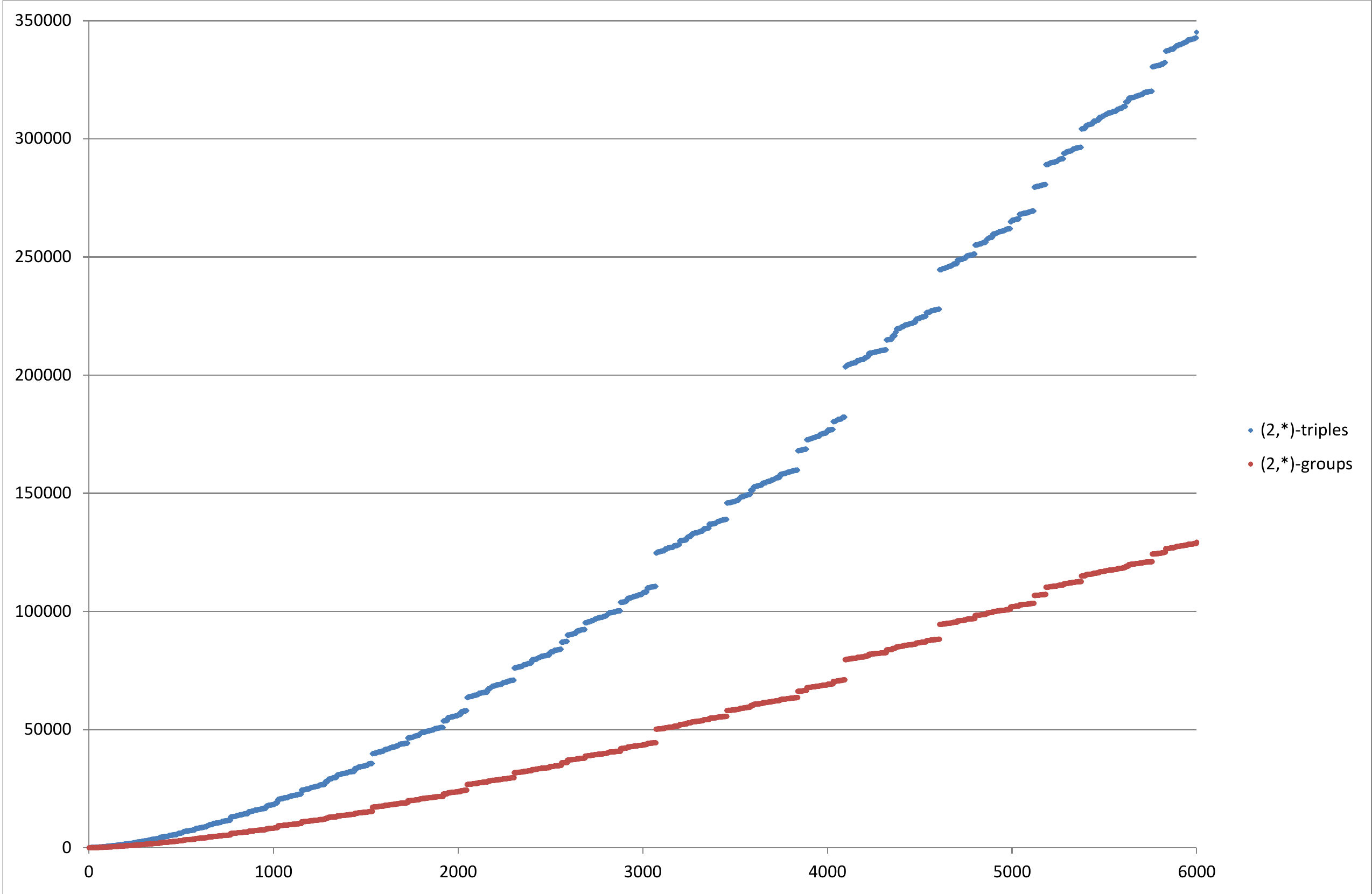}
\caption{\small{Number of $(2,*)$-groups and triples up to a given order.}}
\label{table:2*}
\end{center}
\end{figure}

The second aim of this paper is to prove an asymptotic enumeration result for $(2,*)$-groups and $(2,*)$-triples. Let $f(n)$ and $f_t(n)$ denote the number (up to isomorphism) of $(2,*)$-groups and the number of $(2,*)$-triples (respectively) of order at most $n$. The graphs of $f(n)$ and $f_t(n)$ are depicted in Figure~\ref{table:2*}. A quick look at this picture might suggest that both $f(n)$ and $f_t(n)$ grow polynomially in $n$. This is not the case. In fact, in Section~\ref{sec:5}, we show the following:

\begin{theorem}\label{thrm:asy}
There exist positive constants $a$ and $b$ such that, for $n\geq 2$ we have
$$n^{a\log n}\leq f(n)\leq f_t(n)\leq n^{b\log n}.$$
\end{theorem}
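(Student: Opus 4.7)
The plan is to reformulate the problem in terms of the normal subgroup growth of the group $\Gamma := \la x, g \mid x^2 \ra \cong C_2 * \ZZ$. Up to isomorphism, $(2,*)$-triples $(G, x_0, g_0)$ with $|G|=m$ are in natural bijection with the normal subgroups $N \triangleleft \Gamma$ of index $m$ satisfying $x \notin N$: a triple determines and is determined by the surjection $\phi : \Gamma \to G$ with $\phi(x) = x_0$ and $\phi(g) = g_0$, and two triples are isomorphic precisely when the kernels of these surjections coincide. The normal subgroups of $\Gamma$ which do contain $x$ correspond to subgroups of $\Gamma/\la\la x\ra\ra \cong \ZZ$ and contribute only $O(n)$ to the count. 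Moreover, each group of order $m$ supports at most $m^2$ such triples, giving $f(n) \leq f_t(n) \leq n^2 f(n)$. It therefore suffices to establish the matching bounds $n^{a'\log n} \leq f_t(n) \leq n^{b\log n}$, the lower bound for $f(n)$ then following upon absorbing the polynomial factor into the constant.

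For the upper bound I would invoke the classical estimate that the number of 2-generated finite groups of order $m$ is at most $m^{c\log m}$ for some absolute constant $c$; this follows from Sims' bound on the number of $p$-groups of order $p^k$ combined with standard counts of central extensions across composition factors (see, e.g., Chapter 3 of Lubotzky and Segal's monograph \emph{Subgroup Growth}). Since each such group supports at most $m^2$ triples, summing gives
$$
f_t(n) \leq \sum_{m=1}^n m^2 \cdot m^{c \log m} \leq n^{b \log n}
$$
for a suitable $b$.

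For the lower bound I would exploit that $\Gamma$ is virtually free: the kernel of the natural map $\Gamma \to C_2$ killing $g$ is the free group of rank two generated by $g$ and $xgx$, of index two in $\Gamma$. Standard subgroup-growth theory then provides at least $n^{a'\log n}$ normal subgroups of index at most $n$ in $F_2$ (see again Lubotzky and Segal), and this lower bound transfers to $\Gamma$ with only a trivial correction because the pro-$2$ completion of $\Gamma$ contains that of $F_2$ as an open subgroup of index two. A concrete realisation proceeds via finite $2$-group quotients: for each $k$, one takes a suitable 2-generated $2$-group quotient of $\Gamma$ of bounded nilpotency class and exponent $2^k$, and counts its normal subgroups via dimension counts of subspaces in the graded pieces of its lower central series. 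The main obstacle is precisely producing such an explicit family with enough parameters to achieve the $n^{\log n}$ growth rate; the linear correction from normal subgroups containing $x$ and the $n^2$ gap between $f_t$ and $f$ are then absorbed by slightly decreasing the constant $a$.
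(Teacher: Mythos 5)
Your reduction ($f(n)\leq f_t(n)\leq n^2f(n)$, so it suffices to bound $f$) and your upper bound (the number of $2$-generated groups of order at most $n$ is at most $n^{b\log n}$) coincide with the paper's argument, which cites Lubotzky for the latter. The problem is the lower bound, which is the real content of the theorem and where your argument has a genuine gap. What is needed is a super-polynomial supply of normal subgroups of the \emph{ambient} group (the paper uses $G=C_2*C_3$; you use $\Gamma=C_2*\ZZ$, which would work equally well), since only normal subgroups of the ambient group yield $(2,*)$-quotients. You instead invoke the normal subgroup growth of the free subgroup $F_2\leq\Gamma$ of index $2$ and assert that the bound ``transfers to $\Gamma$ with only a trivial correction.'' It does not: a subgroup normal in $F_2$ need not be normal in $\Gamma$, and the obvious repair (replacing $M\norml F_2$ by its core $M\cap M^x$ in $\Gamma$) destroys the count, because the fibres of $M\mapsto M\cap M^x$ are not controlled. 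Your remark about pro-$2$ completions does not address this either; knowing that $\widehat{\Gamma}_2$ contains a free pro-$2$ group of rank $2$ as an open subgroup reduces the problem to the normal subgroup growth of a \emph{large} (pro-$2$) group, which is exactly the nontrivial statement you are trying to establish. Your proposed fallback, an explicit family of $2$-group quotients of $\Gamma$, is left unconstructed by your own admission (``the main obstacle is precisely producing such an explicit family''), so the lower bound is asserted rather than proved.

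The paper closes precisely this gap by citing M\"uller and Schlage-Puchta's theorem on \emph{normal} growth of large groups: applied to $G=C_2*C_3$ and $N=\ker(G\to C_2\times C_3)$, it gives at least $n^{a'\log n}$ subgroups $M$ with $M\norml G$ (not merely $M\norml N$), $M\leq N$ and $|G:M|\leq n$; the condition $M\leq N$ guarantees that $x\notin M$, so each $G/M$ is a $(2,*)$-group, and dividing by the at most $n^2$ kernels yielding isomorphic quotients gives $f(n)\geq n^{a'\log n}/n^2$. (The paper also has to verify the hypothesis that $N$ is free of rank at least $2$, which it does via Bass--Serre theory and a translation-length argument on the trivalent tree.) If you want to salvage your version, replace the appeal to the normal growth of $F_2$ by an appeal to that same theorem applied directly to $\Gamma$ with its free normal subgroup of rank $2$; without some result of this type, the step from $F_2$ to $\Gamma$ is exactly the missing idea.
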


The problem of optimising the constants $a$ and $b$ in Theorem~\ref{thrm:asy} is beyond the scope of this article and is related to the problem of enumerating the  normal subgroups of finite index in certain finitely presented groups (see for example~\cite[Chapter~$2$]{LS}).

The third aim of the paper is a discussion of a relationship between  $(2,*)$-groups and 
 several highly symmetrical geometric and combinatorial objects; for example, cubic Cayley graphs, arc-transitive digraphs of out-valence $2$ and rotary maps (both chiral and reflexible, on orientable and non-orientable surfaces). These relationships are explained in Sections~\ref{sec:relationship} and~\ref{sec:maps}. Together with our census of $(2,*)$-triples they have allowed us to generate complete lists of:
\begin{itemize}
\setlength{\itemsep}{0pt}
\item cubic Cayley graphs generated by an involution and a non-involution, with at most $6\,000$ vertices;
\item digraphs of out-valence $2$ admitting an arc-regular group of automorphisms, with at most $3\,000$ vertices;
\item rotary maps (both chiral and reflexive) on orientable surfaces, with at most $3\,000$ edges;
\item regular maps on non-orientable surfaces, with at most $1\, 500$ edges.
\end{itemize}

Databases of these objects are also available at \cite{PotWeb}.

\section{Constructing the census of small $(2,*)$-groups}
\label{sec:comp}

In this section, we are concerned with the problem of generating a complete list of $(2,*)$-triples $(G,x,g)$ with $|G| \le m$ for some prescribed constant $m$.
Let us discuss a few possible approaches to this problem.

\subsection{Using a database of small groups}\label{sec:database}

If $m$ is sufficiently small, then  a database of all the groups of order at most $m$ might be available. For example, at the time of writing of
this article, all groups of order $2\,000$ are known, and all except those of order $1\, 024$ are available in standard distributions of {\sc GAP}~\cite{GAP} and {\sc Magma} \cite{magma}. One might thus try to search through such a database and, for each group $G$ in the database, determine all possible
generating pairs $(x,g)$ with $x$ being an involution, up to conjugacy in $\Aut(G)$.

While this approach is rather straight-forward, it has an obvious downside in that it requires iterating over all the groups of order at most $m$. Even getting access to the groups of order $1\,024$ is difficult at the moment and, even in the near future, the groups of order $2\,048$ will probably remain out of reach. Even if one had access to these groups, their number would make it all but impossible to iterate over them. (There are more than $10^{15}$ groups of order $2\, 048$~\cite{GNU}.)

These considerations should make it clear that, to make any significant progress, one should find a way to avoid having to consider all groups of order at most $m$.

\subsection{Using the {\tt LowIndexNormalSubgroups} algorithm}
\label{sec:lins}

Observe that every $(2,*)$-group is an epimorphic image of the free product $U:=C_2*C_\infty = \langle x,g \mid x^2\rangle$ and can thus be obtained
as a quotient of $U$ by a normal subgroup $N$ not containing $x$. Note that this yields not only the $(2,*)$-group $U/N$,
but also the $(2,*)$-triple $(U/N,Nx,Ng)$.  In order to find all $(2,*)$-triples of order at most $m$
it thus suffices to find all normal subgroups of $U$ of index at most $m$.

Firth and Holt~\cite{Firth} have developed a very efficient algorithm for determining normal subgroups of bounded index in a finitely presented group.
The current implementation of this algorithm in {\sc Magma} can, in principle, compute all normal subgroups of index at most $500\,000$. However,
for certain finitely presented groups the practical limitations of the algorithm 
(or at least its current implementation in {\sc Magma}) make the computation unfeasible, even for much smaller indices. 


An approach along these lines (disguised in the language of rotary maps; see Section~\ref{sec:ORmaps}) has been successfully used by Conder~\cite{conderPage} to determine all normal subgroups of $U$ of index at most $2\,000$,
but computations took several months.

\subsection{Using group extensions}\label{sex:extensions}

Finally, we describe the approach that we used to compile a complete list of $(2,*)$-groups and triples of order at most $6\,000$. The method is inductive and constructs $(2,*)$-groups as extensions of smaller ones. The general idea is not new (see for example~\cite{handbook}), but our recent implementation proved to be more efficient than  recent efforts using the {\tt LowIndexNormalSubgroups} algorithm.

 Let us first set some terminology. If $N$ is a normal subgroup of a group $G$ and $Q$ is a group isomorphic to the quotient $G/N$, then we  say that $G$ is an {\em extension of $Q$ by $N$}. 
(Some authors call $G$ an extension of $N$ by $Q$.) If $N$ is a minimal normal subgroup of $G$, then we shall say that the extension is {\em direct},
and if $N$ is elementary abelian, then we say that the extension is {\em elementary abelian}. The {\em soluble radical} of a group is its (unique) largest normal soluble subgroup. 

\begin{lemma}
\label{lem:rec}
If $G$ is a $(2,*)$-group, then either $G$ has a trivial soluble radical, or $G$ is a direct elementary abelian extension of a smaller $(2,*)$-group or of a cyclic group of odd order.
\end{lemma}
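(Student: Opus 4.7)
The plan is to suppose the soluble radical $R$ of $G$ is nontrivial and exhibit a suitable minimal normal subgroup $N$. Since $R$ is a nontrivial normal subgroup, the set of nontrivial normal subgroups of $G$ contained in $R$ is nonempty, so I can choose $N$ minimal in this set; then $N$ is a minimal normal subgroup of $G$ contained in $R$. I then want to show $N$ is elementary abelian by the standard chief-factor argument: the derived subgroup $N'$ is characteristic in $N$, hence normal in $G$, and strictly smaller than $N$ because $N$ is soluble (being contained in $R$), so minimality forces $N' = 1$ and $N$ is abelian. Next, for some prime $p$ the Sylow $p$-subgroup of the abelian group $N$ is characteristic, hence normal in $G$, hence equals $N$ by minimality; finally, the subgroup of $N$ consisting of elements of order dividing $p$ is characteristic, hence equals $N$, so $N$ is elementary abelian.

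The remaining task is to show $G/N$ is either a smaller $(2,*)$-group or a cyclic group of odd order. Since $G = \langle x, g\rangle$, we have $G/N = \langle xN, gN\rangle$, and because $N$ is nontrivial we already know $|G/N| < |G|$. I would split into two cases according to whether $x \in N$ or not. If $x \notin N$, then $xN$ is an involution of $G/N$ and together with $gN$ generates $G/N$, which is therefore a $(2,*)$-group. If instead $x \in N$, then $G/N = \langle gN\rangle$ is cyclic; it is either cyclic of odd order (the second alternative of the lemma, and including the trivial quotient as the order-$1$ case), or cyclic of even order $2k$, in which case the unique involution $(gN)^{k}$ together with $gN$ form a generating pair realising $G/N$ as a $(2,*)$-group.

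There is no substantive obstacle: the argument is a case split driven by whether the distinguished involution $x$ survives modulo $N$. The only steps requiring any care are the routine verification that a minimal normal subgroup contained in the soluble radical is elementary abelian, and making sure the cyclic-of-even-order case is correctly absorbed into the $(2,*)$-alternative rather than erroneously sent to the cyclic-of-odd-order alternative. No properties of $G$ beyond being generated by an involution and a second element, and having a nontrivial soluble radical, are used.
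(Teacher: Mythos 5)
Your proof is correct and follows essentially the same route as the paper: pass to a minimal normal subgroup $N$ inside the nontrivial soluble radical, observe it is elementary abelian, and analyse the quotient $\langle xN, gN\rangle$. The paper phrases the final dichotomy in the contrapositive (``if $G/N$ is not a $(2,*)$-group, then $xN=N$ and $gN$ has odd order''), whereas you spell out the forward case split, including the correct observation that a nontrivial cyclic quotient of even order is still a $(2,*)$-group; the two are equivalent.
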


\begin{proof}
As $G$ is a $(2,*)$-group, we have $G=\langle x,g\rangle$, for some involution $x\in G$ and some $g\in G$. Suppose that the soluble radical $S$ of $G$ is non-trivial. Let $N$ be a minimal normal subgroup of $G$ contained in $S$.
Since $S$ is soluble, $N$ is elementary abelian and hence $G$ is a direct elementary abelian extension of $N$ by $G/N$. If $G/N=\langle xN,gN\rangle$ is not a $(2,*)$-group, then $xN=N$ and $gN$ has odd order, that is, $G/N=\langle gN\rangle$ is cyclic of odd order.
\end{proof}

Lemma~\ref{lem:rec} suggests an inductive procedure to construct $(2,*)$-groups from smaller ones. The base case of this inductive process are $(2,*)$-groups with trivial soluble radical and cyclic groups of odd order. If $G$ is a finite group with trivial soluble radical, then $\soc(G)$ (that is, the group generated by the minimal normal subgroups of $G$) is isomorphic to a direct product of non-abelian simple groups and, moreover, $G$ acts faithfully on $\soc(G)$ by conjugation and thus $G$ embeds into $\Aut(\soc(G))$. This allows one to use a database of small simple groups (available, say, in {\sc Magma} or {\sc GAP}) to construct all groups of order at most $m$ with  trivial soluble radical. 

For example, it is an easy computation to determine that there are precisely $23$ groups with trivial soluble radical of order at most $6\,000$. For a given group $G$ with trivial soluble radical, one can find all $(2,*)$-triples $(G,x,g)$ by determining all epimorphisms from the group $C_2*C_\infty$ to $G$ (where two epimorphisms are considered equivalent if they differ by some automorphism of $G$).

Let us now discuss the inductive step. Suppose we are given a group $Q$ of order $n$ (which, for our purposes, can be taken to be either a $(2,*)$-group or cyclic of odd order)
 and would like to find all direct elementary abelian extensions of $Q$ of order at most $m$.
In view of the general theory of group extensions, it suffices to find all irreducible $\ZZ_pQ$-modules $N$, with $N$ isomorphic to an elementary 
abelian group $\ZZ_p^d$, such that $p^d n\le m$
and then, for each such module $N$, compute the cohomology group $H^2(Q,N)$.
Each element of $H^2(Q,N)$ then gives rise to a direct extension of $Q$ by $N$, and 
conversely, each direct elementary abelian extension of $Q$ of order at most $m$ can be obtained in this manner.
Efficient algorithms for computing the irreducible modules of a given group and the corresponding second cohomology group are known (see for example~\cite{handbook})
and are implemented in {\sc Magma}. 

It is not surprising that computationally the hardest case is the extension of $2$-groups by $2$-groups. Fortunately, in this case some parts of the 
inductive step can be simplified. Namely, when $Q$ is a $2$-group and $p=2$, the only irreducible $\ZZ_2Q$-module is the  (trivial) $1$-dimensional $\ZZ_2Q$-module $\ZZ_2$ and hence only the cohomology group $H^2(Q,\ZZ_2)$ needs to be considered. This shortcut speeds up the determination of
$(2,*)$- $2$-groups considerably.

Once the direct elementary abelian extensions $G$ of $Q$ are determined, one needs to check which of them are $(2,*)$-groups and, for those which
are, find all pairs $(x,g)$ such that $(G,x,g)$ is a $(2,*)$-triple.
This can be done by first computing the automorphism group $\Aut(G)$, then choosing a representative of each orbit of $\Aut(G)$ on the set of involutions of $G$ then, for each representative $x$,
computing the stabiliser $\Aut(G)_x$ of $x$ in $\Aut(G)$, choosing a representative $g$ from each orbit of $\Aut(G)_x$ on $G$ and, finally, discarding the pairs $(x,g)$ that do not generate $G$.

As mentioned at the beginning of the section, this method is the one that we used in order to obtain the complete list of $(2,*)$-groups and triples of order at most $6\, 000$. The computation took a few weeks on a computer with a 2.93 GHz Intel Xeon processor and 56GB of memory.

\section{Proof of Theorem~\ref{thrm:asy}}
\label{sec:5}

Since every $(2,*)$-group gives rise to a $(2,*)$-triple, we have $f(n)\leq f_t(n)$. On the other hand, if $G$ is a $(2,*)$-group of order $n$, then there are at most $n^2$ choices for $(x,g)\in G\times G$ hence at most $n^2$ $(2,*)$-triples with first coordinate $G$. This shows that $f_t(n)\leq n^2f(n)$. In particular, it suffices to prove that there exist positive constants $a$ and  $b$ such that, for $n\geq 2$, we have $$n^{a\log n}\leq f(n)\leq n^{b\log n}.$$

Clearly, $f(n)$  is at most the number of groups  (up to isomorphism) of order at most $n$ generated by $2$ elements. By a celebrated theorem of Lubotzky~\cite[Theorem~$1$]{Lub}, the latter is at most $n^{b\log n}$. (Some information on the constant $b$ can be found in~\cite[Section~$3$, Remark~$1$]{Lub}.)

The lower bound follows easily from a theorem of M\"{u}ller and Schlage-Puchta: let $A$ be a cyclic group of order $2$, let $B$ be a cyclic group of order $3$ and let $G$ be the free product of $A$ and $B$, that is, $G=A\ast B$. Let $C=A\times B$, let $\pi:G\to C$ be the natural projection and let $N$ be the kernel of $\pi$.

Observe that, since $\pi$ is surjective, $N\cap A=N\cap B=1$. By Bass-Serre theory, $N$ is a free group (see~\cite[Theorem~$4$, page $27$]{Serre}). Observe also that $G$ has a natural action as a transitive group of automorphisms of the infinite $3$-valent tree $\mathcal{T}$. As $N\unlhd G$ and $|G:N|=|C|=6$, we see that $N$ has at most $6$ orbits on the vertices of $\mathcal{T}$. Assume that $N$ is cyclic and let $\alpha$ be a generator of $N$. From~\cite[Proposition~$3.2$(iii)]{Tits}, the element $\alpha$ acts as a translation on some infinite path of $\mathcal{T}$. As $\mathcal{T}$ has valency $3$, from this it follows immediately that $N$ has infinitely many orbits on the vertices $\mathcal{T}$, a contradiction. Therefore $N$ is non-cyclic and hence is a free group of rank at least $2$.

For each $n\in\mathbb{N}$, define $$\mathcal{N}_n=\{M\mid M\unlhd G, M\leq N, |G:M|\leq n\}.$$
As $N$ is a free group of rank at least $2$,~\cite[Theorem~$1$]{MS} yields that there exists a positive constant $a'$ with $|\mathcal{N}_n|\geq n^{a'\log n}$ for $n\geq 2$. Observe that, for every group $M\in\mathcal{N}_n$, the quotient $G/M$ is a $(2,*)$-group of order at most $n$. 

Fix $M\in\mathcal{N}_n$, the number of $M'\in\mathcal{N}_n$  with $G/M'\cong G/M$ is exactly the number of surjective homomorphisms from $G$ to $G/M$. Since $G$ is $2$-generated and $|G/M|\leq n$, the number of such homomorphisms is at most $|G/M|^2\leq n^2$. We conclude that $f(n)\geq |\mathcal{N}_n|/n^2\geq (n^{a'\log n})/n^2$ and the result follows.

\section{$(2,*)$-groups and graphs}
\label{sec:relationship}

\subsection{Cubic Cayley graphs.}
\label{sec:cay}

Let $G$ be a group and let $S$ be a generating set for $G$ which is inverse-closed and does not contain the identity. The \emph{Cayley graph on $G$ with connection set $S$} is the graph with vertex-set $G$ and two vertices $u$ and $v$ adjacent if $uv^{-1}\in S$. It is denoted $\Cay(G,S)$. It is easy to see that $\Cay(G,S)$ is a connected vertex-transitive graph of valency $|S|$. 

Cayley graphs form one of the most important families of vertex-transitive graphs. In fact, at least for graphs of small order, the overwhelming majority of vertex-transitive graphs are Cayley graphs. This makes them crucial in any project of enumeration of vertex-transitive graphs. 

With respect to valency, the first non-trivial case is the case of cubic graphs. Let $\Gamma=\Cay(G,S)$ be a cubic Cayley graph. Note that $S$ is an inverse-closed set of size three and thus must consist either of three involutions or have the form $\{x,g,g^{-1}\}$ where $x$ is an involution and $g$ is not. In the latter case, we say that $\Gamma$ has \emph{type I}. In this case, $(G,x,g)$ is a $(2,*)$-triple and thus type I graphs arise from $(2,*)$-triples.

While constructing the graphs of type I from the catalogue of $(2,*)$-triples is computationally easy, reduction modulo graph isomorphism requires a careful choice of computational tools. For example, {\sc magma} failed to finish the computation in reasonable time but the {\sc Sage} package~\cite{sage} performed considerably better and yielded the result in a few hours.
We would like to thank Jernej Azarija for his help in this matter, which allowed us to conclude that:

\begin{theorem}\label{thm:cubicCayley}
There are precisely $274\,171$ connected cubic Cayley graphs of type I with at most $6\,000$ vertices.
\end{theorem}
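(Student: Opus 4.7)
The proof is essentially a computer enumeration built on top of the database of $(2,*)$-triples of order at most $6\,000$ produced in Theorem~\ref{thm:main2*}, so my plan is to describe a reduction procedure and then outline the isomorphism test that makes it tractable.

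First, I would observe that the assignment $(G,x,g)\mapsto \Cay(G,\{x,g,g^{-1}\})$ is a well-defined map from the set of $(2,*)$-triples onto the set of (isomorphism classes of) connected cubic Cayley graphs of type I: surjectivity is just the definition of type I given in Section~\ref{sec:cay}, while the fact that the resulting graph is connected and cubic follows from the generating property of $\{x,g\}$ and from $g\neq g^{-1}$ (since $g$ is a non-involution). Hence the $345\,070$ $(2,*)$-triples of order at most $6\,000$ from Theorem~\ref{thm:main2*} produce a list of cubic Cayley graphs on at most $6\,000$ vertices which is complete, and the problem reduces to eliminating duplicates under graph isomorphism.

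The plan for the isomorphism reduction is the usual two-level strategy. First, for each triple $(G,x,g)$ compute a cheap invariant of $\Cay(G,\{x,g,g^{-1}\})$ — at minimum the number of vertices, but also, say, the degree sequence of the line graph, the girth, and a small number of spectral moments — and bucket the graphs accordingly; only graphs landing in the same bucket can possibly be isomorphic. Second, within each bucket, compute a canonical labelling (for instance via \texttt{nauty} or \texttt{bliss}, both accessible through the \textsc{Sage} package~\cite{sage}) and keep exactly one representative of each canonical form. Summing the number of canonical forms across the buckets then yields the final count.

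The main obstacle is purely computational. The bulk of the $345\,070$ triples lies at orders close to $6\,000$, which produces graphs on several thousand vertices; running a canonical-form algorithm on a collection of this size, even after bucketing, is the step where naive approaches fail, as reported in the text for \textsc{magma}. My contingency would be to refine the cheap invariants (using, e.g., a few iterations of Weisfeiler–Leman refinement, or orbit partitions under $\Aut(G)$ acting on $\Cay(G,\{x,g,g^{-1}\})$) so that most buckets are singletons and only a small minority requires a full canonical-labelling call; the fact that \textsc{Sage} completes the job in a few hours confirms that such a refinement is sufficient in practice.

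Finally, I would cross-check the result. One sanity check is that for every bucket containing more than one triple, each collision must correspond either to a genuine non-Cayley isomorphism between the graphs or to distinct generating pairs of the same group lying in different $\Aut(G)$-orbits; these can be verified on a sample. A second cross-check is to re-run the enumeration restricted to small orders where independent data exist (for instance the lists of small cubic vertex-transitive graphs in the literature) and confirm that the counts agree. Carrying out these steps and tallying the surviving canonical forms then yields the stated number $274\,171$.
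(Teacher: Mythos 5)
Your proposal takes essentially the same route as the paper: generate the Cayley graphs from the census of $(2,*)$-triples of Theorem~\ref{thm:main2*} and then reduce modulo graph isomorphism by computer (the paper likewise reports that \textsc{Magma} failed to finish but \textsc{Sage} completed the reduction in a few hours). The only small slip is that not every $(2,*)$-triple yields a type I graph --- you must first discard the triples in which $g$ is an involution or the identity, since then $\{x,g,g^{-1}\}$ is not an inverse-closed connection set of size three.
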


Moreover, by Theorem~\ref{thrm:asy}, there are at most $n^{b\log n}$  type I graphs of order at most $n$. On the other hand, non-isomorphic $(2,*)$-triples may give rise to isomorphic Cayley graphs. In general, it is very hard to control when two non-isomorphic $(2,*)$-triples give rise to isomorphic Cayley graphs and thus the lower bound in Theorem~\ref{thrm:asy} does not immediately give a lower bound on the number of graphs of type I. (See for example~\cite{enumeration} for more details on such lower bounds.)

Recently, we published a census of all cubic vertex-transitive graphs of order at most $1\,280$~\cite{cubiccensus}. The method we used to construct the graphs of type I was a mix of the ones described in Section~\ref{sec:database} and Section~\ref{sex:extensions} (see ~\cite[Section~$3$]{cubiccensus}) and would have been difficult to extend to orders greater than $2\,000$. The methods described in the current paper thus constitute an improvement, as they allowed us to reach order $6\,000$.

\subsection{Arc-transitive digraphs of out-valency two}
\label{sec:di}

A \emph{digraph} is an ordered pair $(V,A)$ where $V$ is a finite non-empty set and $A\subseteq V \times V$ is a binary relation on $V$. If $\Gamma=(V,A)$ is a digraph, then we shall refer to the set $V$ and the relation $A$ as the {\em vertex-set} and the {\em arc-set} of $\Gamma$, and denote them by $\V(\Gamma)$ and $\A(\Gamma)$, respectively. Members of $V$ and $A$ are called {\em vertices} and {\em arcs}, respectively. For a vertex $v$ of $\Gamma$, the number $|\{w\in \V(\Gamma)\mid (v,w)\in A(\Gamma)\}|$ is called the \emph{out-valency} of $v$. 

An \emph{automorphism} of a digraph $\Gamma$ is a permutation of $\V(\Gamma)$ which preserves the arc-set $\A(\Gamma)$. Let $G$ be a subgroup of the automorphism group $\Aut(\Gamma)$ of $\Gamma$. We say that $\Gamma$ is \emph{$G$-arc-transitive} provided that $G$ acts transitively on $\A(\Gamma)$. In this case, if $\Gamma$ is connected, then each of its vertices has the same out-valency, say $d$, and we say that $\Gamma$ has \emph{out-valency} $d$.

If $\Gamma$ is an arc-transitive digraph, then its arc-set $\A(\Gamma)$ is either {\em symmetric} (that is, for every arc $(u,v)\in \A(\Gamma)$,
also $(v,u)\in \A(\Gamma$)), or {\em asymmetric} (that is, for every $(u,v)\in \A(\Gamma)$, we have $(v,u)\notin\A(\Gamma)$).
We will think of a digraph with a symmetric arc-set as a {\em graph}.

Let $\Gamma$ be a connected $G$-arc-transitive digraph of out-valency two. It is easily seen that, for a vertex $v$ of $\Gamma$, the vertex-stabiliser $G_v$ has order $2^s$ for some $s\geq 1$. Moreover, $s=1$ if and only if $G$ acts regularly on $\A(\Gamma)$. In this case, let $x$ be the involution generating $G_v$ and let $g$ be an element of $G$ mapping $(u,v)$ to $(v,w)$, where $(u,v)$ and $(v,w)$ are arcs of $\Gamma$. It is not hard to show that $\{x,g\}$ generates $G$ and thus $(G,x,g)$ is a $(2,*)$-triple. Note also that $\langle x\rangle$ is not central in $G$ (as it is the point-stabiliser of a transitive permutation group). 
Every digraph of out-valency $2$ with an arc-regular group of automorphisms thus arises from a $(2,*)$-triple with $x$ not central.

Conversely, given a $(2,*)$-triple $(G,x,g)$ such that $\langle x\rangle$ is not central in $G$, one can recover a $G$-arc-regular digraph of out-valency $2$ by the well-known coset graph construction: the vertices are the right cosets of $H=\langle x\rangle$ in $G$ with $(Ha,Hb)$ being an arc whenever $ba^{-1} \in HgH$. 

As in the previous section, checking for digraph isomorphism requires some computational work which was performed by Katja Ber\v{c}i\v{c} as a part of her doctoral thesis \cite{katja}. This allowed us to obtain:

\begin{theorem}\label{thm:ARD}
There are precisely $165\,952$ asymmetric connected digraphs of out-valency $2$ on at most $3\,000$ vertices, with an arc-regular group of automorphisms.
\end{theorem}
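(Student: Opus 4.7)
The plan is to leverage the correspondence described in the discussion preceding the statement: every connected arc-regular digraph of out-valency $2$ on $n$ vertices arises, via the coset-graph construction, from a $(2,*)$-triple $(G,x,g)$ with $|G|=2n$ and $\langle x\rangle$ non-central in $G$. Since $n \le 3\,000$ corresponds to $|G|\le 6\,000$, this means the enumeration reduces to operating on the census of $(2,*)$-triples already produced for Theorem~\ref{thm:main2*}, and no new group-theoretic computation is required.

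First, I would iterate through this census and discard every triple in which $\langle x\rangle$ is central in $G$, since such a triple does not yield a faithful transitive group action with point-stabiliser $\langle x\rangle$. Next, I would restrict to those triples producing an asymmetric digraph: the coset graph $\Cos(G,\langle x\rangle, g)$ has symmetric arc-set precisely when $\langle x\rangle g \langle x\rangle = \langle x\rangle g^{-1}\langle x\rangle$, which is easily seen to be equivalent to $g^{-1} \in \{g,\, xg,\, gx,\, xgx\}$. For each triple surviving both filters I would build the coset graph explicitly.

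The final step is to reduce the resulting list modulo digraph isomorphism. A given digraph can admit several distinct arc-regular subgroups of its full automorphism group, and these in turn can correspond to pairwise non-isomorphic $(2,*)$-triples, so duplicates are to be expected and must be identified. This isomorphism reduction is the main computational obstacle, entirely analogous to the one encountered for cubic Cayley graphs in Theorem~\ref{thm:cubicCayley}: general-purpose {\sc magma} routines are unlikely to finish in reasonable time at this scale, and one would have to rely on specialised code such as that developed in~\cite{katja}. Tabulating the resulting canonical representatives would then yield the stated count of $165\,952$.
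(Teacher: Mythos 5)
Your proposal follows essentially the same route as the paper: reduce to the census of $(2,*)$-triples via the correspondence between arc-regular out-valency-$2$ digraphs on $n\le 3\,000$ vertices and triples $(G,x,g)$ with $|G|=2n\le 6\,000$ and $\langle x\rangle$ non-central, then build the coset digraphs and reduce modulo digraph isomorphism, with that last reduction being the genuine computational bottleneck handled by the specialised code of~\cite{katja}. The argument and the filters (non-centrality of $\langle x\rangle$, asymmetry via $\langle x\rangle g\langle x\rangle\neq\langle x\rangle g^{-1}\langle x\rangle$) are exactly what the paper's discussion in Section~\ref{sec:di} relies on.
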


This census of digraphs was used in our recent census of all arc-transitive digraphs of out-valency two with at most $1\,000$ vertices~\cite{4HATcensus}.

As in Section~\ref{sec:cay}, Theorem~\ref{thrm:asy} implies that, up to isomorphism, there are at most $n^{b\log n}$ digraphs of out-valency $2$ with an arc-regular group of automorphisms but, again,  non-isomorphic $(2,*)$-triples may give rise to isomorphic digraphs and thus lower bounds are harder to obtain.

Finally, we note that the underlying graph of an asymmetric $G$-arc-transitive digraph $\Gamma$ of out-valency $d$ is a $2d$-valent graph 
on which $G$ acts \emph{half-arc-transitively} (that is, vertex- and edge- but not arc-transitively). 
Moreover, this process can be reversed (see for example~\cite[Section 2.2]{4HATcensus}) and we thus obtain the following:

\begin{theorem}\label{thm:4HAT}
There are precisely $76\, 200$ connected $4$-valent graphs on at most $3\,000$ vertices that admit a half-arc-transitive group of automorphisms with vertex-stabiliser of order $2$.
\end{theorem}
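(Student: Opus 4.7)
The plan is to leverage Theorem~\ref{thm:ARD} via the correspondence, sketched in the paragraph preceding the statement and detailed in~\cite[Section~$2.2$]{4HATcensus}, between asymmetric $G$-arc-regular digraphs of out-valency~$2$ and $4$-valent graphs admitting a half-arc-transitive group of automorphisms with vertex-stabiliser of order~$2$. In one direction, the underlying graph $\Delta$ of such a digraph $\Gamma$ is $4$-valent, and $G\le \Aut(\Delta)$ is vertex- and edge-transitive on $\Delta$ but not arc-transitive (since $G$ preserves the orientation inherited from~$\Gamma$), so $G$ is half-arc-transitive with $|G_v|=2$. In the other direction, starting from a $4$-valent graph $\Delta$ with a half-arc-transitive subgroup $G\le \Aut(\Delta)$ of vertex-stabiliser order~$2$, either of the two $G$-orbits on the arcs of $\Delta$ serves as the arc-set of an asymmetric $G$-arc-regular digraph of out-valency~$2$ whose underlying graph is~$\Delta$.

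The strategy is therefore to take the list of $165\,952$ digraphs furnished by Theorem~\ref{thm:ARD}, compute the underlying graph of each, and reduce the resulting multiset of $4$-valent graphs modulo graph isomorphism. The correspondence above guarantees completeness: every $4$-valent graph on at most $3\,000$ vertices admitting a half-arc-transitive group of automorphisms with vertex-stabiliser of order~$2$ appears as the underlying graph of at least one digraph in the census, obtained by picking such a subgroup of its automorphism group and one of its two arc-orbits.

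The main obstacle is computational rather than theoretical: testing isomorphism on a list of this size is precisely the same bottleneck encountered in Theorem~\ref{thm:cubicCayley}, where {\sc magma} proved impractical but {\sc Sage}~\cite{sage} (or an external tool such as {\tt nauty}) handled the task comfortably. Running the analogous reduction on the underlying graphs of our digraphs is expected to collapse the $165\,952$ entries to exactly $76\,200$ isomorphism classes, yielding the stated count. As an internal consistency check, one may observe that a digraph $\Gamma$ in the census and its reverse $\Gamma^{-1}$ always have the same underlying graph and are isomorphic as digraphs if and only if some element of $\Aut(\Delta)$ swaps the two $G$-orbits on arcs; summing the resulting multiplicities $1$ or~$2$ over the output list should recover the input total~$165\,952$.
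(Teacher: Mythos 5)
Your proposal is correct and follows essentially the same route as the paper: the authors likewise invoke the reversible correspondence between asymmetric arc-regular digraphs of out-valency $2$ and $4$-valent graphs with a half-arc-transitive group having vertex-stabiliser of order $2$ (citing~\cite[Section~2.2]{4HATcensus}), and then obtain the count computationally from the census of Theorem~\ref{thm:ARD} by passing to underlying graphs and reducing modulo isomorphism.
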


\section{$(2,*)$-groups and maps}
\label{sec:maps} 
 
Intuitively, a map is a drawing of a graph onto a surface or, slightly more formally, 
it is an embedding of a graph onto a closed surface (either orientable or non-orientable) which decomposes the surface into open, simply connected regions, called {\em faces}. Each face can be decomposed further into {\em flags}, that is, triangles with one vertex in the centre of the face, one vertex in the centre of an edge and one in a vertex of the embedded graph. An automorphism of a map is then defined as a permutation of the flags induced by a homeomorphism of the surface that preserves the embedded graph.

It is well known that this geometric notion can also be viewed algebraically. In this paper, we adopt this algebraic point of view
and use the geometric interpretation only as a source of motivation. For a more thorough discussion on different aspects of maps, and the relationship between
their geometric and algebraic description, we refer the reader to \cite{JonSin,JonSin2}, or to the excellent survey~\cite{siran}.

Enumeration of maps, especially those exhibiting many symmetries, has a long history, going back to the Ancient Greeks and the classification of the Platonic solids.
In this section, we shall be interested in the enumeration and construction of all rotary maps (both reflexible and chiral, orientable and non-orientable) with
a small number of edges. Such an enumeration was first attempted by Wilson in~\cite{WilPhD} for the case of oriented rotary maps on at most $100$ edges.
More recently, a complete list of all rotary maps on at most $1\,000$ edges was obtained by Conder \cite{conderPage}. 

This section has no ambition to be a survey on maps and their symmetries; its main purpose is to show
how the database of $(2,*)$-groups was used to extend Conder's database \cite{conderPage} up to $3\,000$ edges in the orientable case and up to 
$1\,500$ edges in the non-orientable case.

\subsection{Monodromy groups of maps}

A faithful transitive action of a $(2,*)$-group on a set $\cD$ can be interpreted as
the {\em monodromy group} of a map on an orientable surface.
More precisely, if $(G,x,g)$ is a $(2,*)$-triple acting faithfully and transitively on a finite set $\cD$ in such a way that $x$ has no fixed points,
then one can construct a map with faces, edges and vertices corresponding to the orbits of the groups
$\langle g \rangle$, $\langle x \rangle$ and $\langle xg \rangle$, respectively, and with
incidence between these objects given in terms of non-empty intersection.
Conversely, every  map on a closed orientable surface can be obtained in this way from a $(2,*)$-triple. By considering transitive faithful actions of $(2,*)$-groups, one can thus obtain all graph embeddings into orientable surfaces.

\subsection{Oriented rotary maps}
\label{sec:ORmaps}

An {\em automorphism} of  the map $\cM$ associated with a $(2,*)$-triple $(G,x,g)$ acting on $\cD$ is any permutation of $\cD$ that commutes with $x$ and $g$,
and thus the automorphism group $\Aut(\cM)$ equals the centraliser of $G$ in $\Sym(\cD)$.

A very special case occurs when $\Aut(\cM)$ is transitive on the dart-set $\cD$, which occurs if and only if $G$ (and thus also $\Aut(\cM)$)
acts regularly on $\cD$. 
In that case one can identify $\cD$ with the elements of $G$ in such a way that $x$ and $g$ act upon $\cD=G$
as permutations $a\mapsto xa$ and $a\mapsto ga$ for all $a\in G$, respectively. The centraliser $\Aut(\cM)$ of $G$ in $\Sym(\cD)$
is then generated by the permutation $a \mapsto ax$ and $a \mapsto ag$. In this sense we may view the group $G$ 
as the automorphism group $\Aut(\cM)$ (rather than the monodromy group) acting regularly with right multiplication on the set of darts $\cD=G$.
In this setting, the elements $R=g$ and $S=g^{-1}x$ act as  one step-rotations around the centre of a face and around a vertex incident to that face,
respectively. We shall always assume that the underlying surface of the map is oriented in such a way that $R$ and $S$ rotate one step in the clock-wise sense;
note that the same map but with the opposite orientation is obtained from the triple $(G,g^{-1},gxg^{-1})$, giving rise to the rotations $R^{-1}$ and $S^{-1}$.
This justifies the following terminology.

\begin{definition}
An {\em oriented rotary map} is a triple $(G,R,S)$ such that $G$ is a group, $\{R,S\}$ is a generating set for  $G$ and $RS$ is an involution. Two oriented rotary maps $(G_1,R_1,S_1)$ and $(G_2,R_2,S_2)$ are \emph{isomorphic} if there exists a group isomorphism from $G_1$ to $G_2$ mapping $R_1$ to $R_2$ and $S_1$ to $S_2$.
\end{definition}

Given an oriented rotary map $(G,R,S)$ one can reverse the process and construct the associated $(2,*)$-triple $(G,RS,R)$. Moreover,
two oriented rotary maps are isomorphic if and only if the associated $(2,*)$-triples are isomorphic. Thus, there is a bijective correspondence between the isomorphism classes of $(2,*)$-triples and the isomorphism classes of oriented rotary maps.

Let us now define a few invariants and operations on oriented rotary maps that are motivated by their geometric interpretations as embeddings of graphs on surfaces.
Let $(G,R,S)$ be an oriented rotary map. A right coset of $\langle R\rangle$ in $G$ is called a {\em face}, a coset of $\langle S \rangle$ a {\em vertex},
and a coset of $\langle RS\rangle$ an {\em edge} of the map. 
The orders of $R$ and $S$ are thus called the {\em face-length} and the {\em valence} of the map, respectively, while the symbol $\{|R|,|S|\}$ is called the {\em type} of the map. Furthermore, since $|\langle RS \rangle|=2$, it follows that a the oriented rotary map $(G,R,S)$ has $|G|/2$ edges.
The {\em mirror image} of $(G,R,S)$ is the oriented rotary map $(G,R^{-1},S^{-1})$. If an oriented rotary map is isomorphic to its mirror image, it is called {\em reflexible} and is {\em chiral} otherwise. Our enumeration of $(2,*)$-triples (see Theorem~\ref{thm:main2*}) yields the following result. 

\begin{theorem}
\label{thm:mainMaps}
There are precisely $345\,070$ 
oriented rotary maps with at most $3\,000$ edges,
of which $122\,092$ are chiral and $222\,978$ are reflexible.
\end{theorem}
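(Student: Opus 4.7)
The plan is to reduce the theorem to the census of $(2,*)$-triples recorded in Theorem~\ref{thm:main2*}. By the bijective correspondence between isomorphism classes of $(2,*)$-triples and isomorphism classes of oriented rotary maps established immediately before the statement, together with the fact that an oriented rotary map $(G,R,S)$ has $|G|/2$ edges, maps with at most $3\,000$ edges correspond bijectively to $(2,*)$-triples of order at most $6\,000$. Theorem~\ref{thm:main2*} then yields the total count $345\,070$ at once.

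To split this total into chiral and reflexible contributions, I would translate the reflexibility condition back to the language of $(2,*)$-triples. The map associated with $(G,x,g)$ corresponds to the oriented rotary map with $R=g$ and $S=g^{-1}x$; its mirror image $(G,R^{-1},S^{-1})$ therefore corresponds, under the inverse bijection, to the $(2,*)$-triple $(G,\,g^{-1}xg,\,g^{-1})$. Consequently, $(G,x,g)$ yields a reflexible map if and only if there is $\phi \in \Aut(G)$ with $\phi(x)=g^{-1}xg$ and $\phi(g)=g^{-1}$; composing $\phi$ with conjugation by $g$, this is equivalent to the existence of an automorphism of $G$ fixing $x$ and inverting $g$. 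The assignment $\iota\colon [G,x,g]\mapsto [G,\,g^{-1}xg,\,g^{-1}]$ is then an involution on the set of $345\,070$ isomorphism classes, whose fixed points are the reflexible maps and whose orbits of size two account for pairs of mirror-image chiral maps.

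The remaining work is a direct enumeration over the database. For each representative $(G,x,g)$, one retrieves (or computes) $\Aut(G)$, the stabiliser $\Aut(G)_x$, and tests whether $g^{-1}$ lies in the orbit of $g$ under $\Aut(G)_x$; reflexibility corresponds to success. The main obstacle is strictly computational rather than conceptual, and it is largely neutralised by the fact that the automorphism groups and the orbits of involutions have already been computed when building the census in Section~\ref{sec:comp}. A handy consistency check is that the chiral count must be even, since chiral triples are paired by $\iota$, which $122\,092$ indeed satisfies; the enumeration is expected to produce the announced split into $222\,978$ reflexible and $122\,092$ chiral maps.
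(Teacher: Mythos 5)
Your proposal is correct and takes essentially the same route as the paper: Theorem~\ref{thm:mainMaps} is obtained there directly from the bijective correspondence between isomorphism classes of $(2,*)$-triples and of oriented rotary maps (together with the fact that the map on $(G,R,S)$ has $|G|/2$ edges), combined with the computational census of Theorem~\ref{thm:main2*}, the chiral/reflexible split being a machine check of whether each map is isomorphic to its mirror image. Your explicit criterion --- that the map of $(G,x,g)$ is reflexible if and only if some automorphism of $G$ fixes $x$ and inverts $g$ --- is a correct unpacking of that check, and in passing it fixes a small slip in the paper, which records the mirror-image triple as $(G,g^{-1},gxg^{-1})$ where it should be $(G,g^{-1}xg,g^{-1})$.
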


The number of reflexible and chiral oriented rotary maps with up to a given number of edges is depicted in Figure~\ref{table:chiral}.

\begin{figure}[h]
\begin{center}
\includegraphics[scale=0.45]{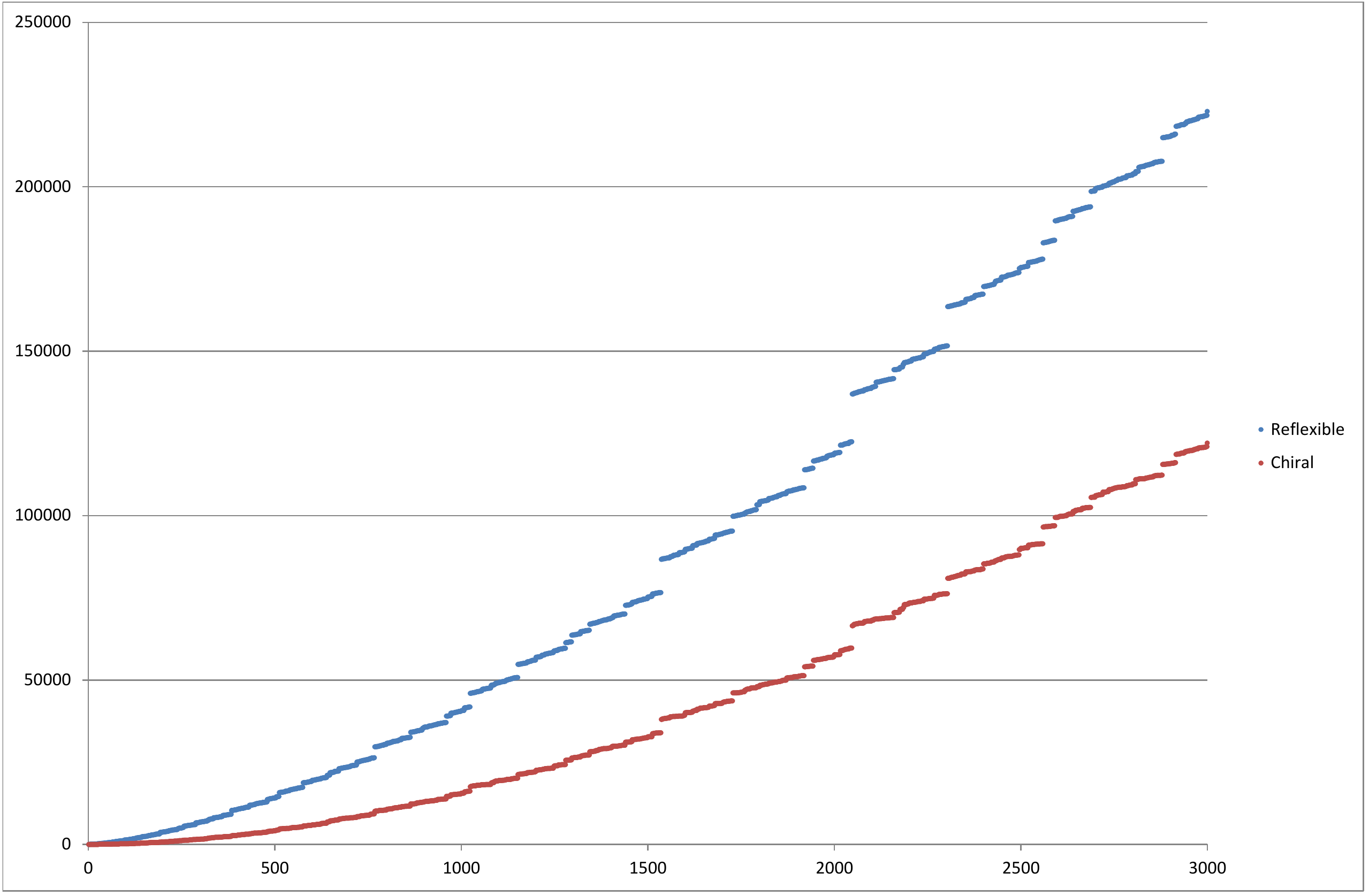}
\caption{\small{Number of chiral and reflexible oriented rotary maps with up to a given number of edges.}}
\label{table:chiral}
\end{center}
\end{figure}

\subsection{Regular maps}
\label{sec:regmaps}

Let $\cM=(G,R,S)$ be a reflexible oriented rotary map. By definition, there exists an automorphism $\tau$ of $G$, called the \emph{reflector} of $\cM$,
 with  $\tau(R)=R^{-1}$ and $\tau(S)=S^{-1}$. (Since $R$ and $S$ generate $G$, the reflector is unique and of order at most $2$).
 Let $C_2$ be a group of order $2$, let $b$ be its generator, let $\vartheta \colon C_2 \to \Aut(G)$ be the
homomorphism mapping $b$ to $\tau$, and let
$$A=G\rtimes_\vartheta C_2.$$
Further, let
$a=Rb\> \hbox{ and } c=b S$, and observe that
$a$ and $c$ are involutions such that $a\not = c$. 
Moreover, $ac=Rb\cdot bS=RS$ and $ca=bS\cdot Rb=S^{-1}R^{-1}=RS$ because $RS$ is an involution; in particular, $\langle a, c\rangle$ is the Klein $4$-group. 
Note also that $R=ab$ and $S=bc$, and therefore $\langle ab,bc\rangle$ has index $2$ in $A$.

Geometrically, the group $A$ can be viewed as the automorphism group of the 
orientable (but unoriented) map arising from $(G,R,S)$, with $\langle R, S\rangle$ corresponding to the group of orientation preserving automorphisms
and $b$ acting as the orientation reversing automorphism which reflects about the axis through the vertex corresponding to $\langle S \rangle$ and
the centre of the face corresponding to $\langle R \rangle$. In this setting, the automorphism $c$ can be viewed as the reflection over 
the edge $\{v, v^{R^{-1}}\}$, where $v$ is the vertex corresponding to $\langle S \rangle$, while $a$ reflects over the line perpendicular to that edge.
The group $A$ then acts regularly on the set of flags of the oriented rotary map.
This motivates the following definition:

\begin{definition}
\label{def:reg}
A {\em regular map} is a quadruple $(A,a,b,c)$ such that $A$ is a group, $a,b,c$ are involutions generating $A$ and $|\langle a,c\rangle| =4$.
Two regular maps
$(A_1,a_1,b_1,c_1)$ and $(A_2,a_2,b_2,c_2)$ are \emph{isomorphic} if there exists a group isomorphism from $A_1$ to $A_2$ mapping $(a_1,b_1,c_1)$ to $(a_2,b_2,c_2)$.
\end{definition}

If a regular map $\cM'=(A,a,b,c)$ is obtained from a reflexible oriented rotary map $\cM=(G,R,S)$ by the procedure described above,
then we shall say that $\cM'$ is an {\em orientable regularisation} of $\cM$.
It should be observed at this point that the geometric interpretation of the reflexible oriented rotary map $\cM'$ and its orientable regularisation $\cM$ are the same,
and that the oriented rotary map $\cM'=(G,R,S)$ can be reconstructed from $\cM=(A,a,b,c)$ by letting $R=ab$, $S=bc$, and $G=\langle ab,bc\rangle$.

Geometrically, the group $G$ corresponds to the orientation preserving automorphisms of $\cM$ and has index $2$ in $G$.
\begin{definition}
A regular map $(A,a,b,c)$ is called {\em orientable} if $\langle ab,bc\rangle$ has index $2$ in $A$ and {\em non-orientable} otherwise.
\end{definition}
The above discussion shows that a regular map is orientable if and only if it arises as the orientable regularisation of 
some reflexible oriented rotary map. Since the orientable regularisations $\cM_1'$ and $\cM_2'$ of $\cM_1$ and $\cM_2$ are isomorphic if and only if $\cM_1$ and $\cM_2$ 
are isomorphic (as oriented rotary maps), there is a bijective correspondence between the isomorphism classes of 
reflexible oriented rotary maps and the isomorphism classes of orientable regular maps. In particular, our enumeration immediately yields a census of orientable regular maps with at most $3\, 000$ edges (see Theorem~\ref{thm:mainMaps}).

Besides orientable regularisation, there is also a different procedure that can be applied to certain oriented rotary maps, which yields all non-orientable regular maps.

Let $\cM = (G,R,S)$ be an oriented rotary map. If $b$ is an involution of $G$ such that $R^b=R^{-1}$ and $S^b=S^{-1}$,
then we say that $b$ is an {\em antipodal reflector of $\cM$}; we shall follow the terminology of \cite{ConWil} 
and call $\cM$ {\em antipodal} in this case.

If $b$ is an antipodal reflector of $\cM$, then one can form a non-orientable regular map $(G,Rb,b,bS)$, 
which we shall call the {\em non-orientable regularisation of} $\cM$ {\em with respect to $b$}. 
Conversely, if $\cM'=(G,a,b,c)$ is a non-orientable regular map, then $\cM=(G,ab,bc)$ is an 
oriented rotary map admitting an antipodal reflector $b$, and $\cM'$ is the non-orientable regularisation of $\cM$
with respect to $b$.

Note that  non-orientable regularisations of $\cM$ that correspond to distinct antipodal reflectors are never isomorphic. Indeed, if $b_1$ and
$b_2$ are two antipodal reflectors of $(G,R,S)$ and if the corresponding non-orientable regularisations $(G,Rb_1,b_1,b_1S)$ and
$(G,Rb_2,b_2,b_2S)$ are isomorphic via an automorphism $\varphi$ of $G$, then $b_2=\varphi(b_1)$, and thus $\varphi(R) = \varphi(R){b_2}^2= \varphi(Rb_1)b_2 = R{b_2}^2=R$; similarly $\varphi(S) = S$, and since $G=\langle R,S\rangle$, this shows that $\varphi$ is trivial and $b_2=b_1$.
Moreover, two antipodal reflectors always differ by a central involution, implying that the number of non-isomorphic non-orientable regularisations
arising from an antipodal oriented rotary map $(G,R,S)$ is one more than the number of involutions in the centre of $G$. 
This phenomenon was first observed in~\cite{WilNOmaps}.

Let us point out here that a non-orientable regular map $(G,a,b,c)$
also has a geometric interpretation, in which vertices, edges and  faces 
correspond to the cosets of the subgroups $\langle b,c \rangle$, $\langle a,c \rangle$, and $\langle b,c\rangle$ in $G$, respectively,
and with the incidence between these objects given with non-empty intersection.
The underlying surface of the map is in this case non-orientable.

With this geometric interpretation in mind,
 the non-orientable regularisation $\cM'$ of an antipodal oriented rotary map $\cM$ is obtained as
the quotient by a central involution in $\Aut(\cM)$ that acts as an orientation reversing homeomorphism of the underlying surface
(see \cite[Proof of Theorem]{ConWil}), and conversely, $\cM$ is the unique orientable smooth $2$-cover of $\cM'$.

The discussion above suggests an obvious strategy  to construct all non-orientable regular maps: construct all oriented rotary maps then, for each oriented rotary map, find all of its antipodal reflectors and then, for each such reflector, construct the corresponding non-orientable regularisation.

In this correspondence, an antipodal oriented rotary map with $m$ edges yields a non-orientable regular map with $m/2$ edges. Hence our database of
oriented rotary maps with at most $3\, 000$ edges yields a complete list of non-orientable regular maps with at most $1\, 500$ edges.
The following theorem summarises the results of our computations.

\begin{theorem}
\label{thm:mainMaps2}
There are precisely $14\,375$ 
 non-orientable regular maps with at most $1\, 500$ edges.
\end{theorem}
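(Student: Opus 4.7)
The plan is to exploit the explicit correspondence from Section~\ref{sec:regmaps}: every non-orientable regular map arises as the non-orientable regularisation $(G,Rb,b,bS)$ of an antipodal oriented rotary map $(G,R,S)$ with respect to some antipodal reflector $b$; and for each such antipodal map, the number of non-isomorphic non-orientable regularisations is $k+1$, where $k$ is the number of involutions in $Z(G)$.

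First, I record the edge-count bookkeeping. A non-orientable regular map $(G,a,b,c)$ has $|G|/|\langle a,c\rangle|=|G|/4$ edges, while the associated oriented rotary map $(G,ab,bc)$ has $|G|/2$ edges. Hence non-orientable regular maps with at most $1\,500$ edges correspond to antipodal oriented rotary maps with at most $3\,000$ edges, all of which are captured by the census underlying Theorem~\ref{thm:mainMaps}.

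Second, I iterate over the $345\,070$ oriented rotary maps $(G,R,S)$ from that census. For each one I search for an involution $b\in G$ satisfying $R^b=R^{-1}$ and $S^b=S^{-1}$; this is a routine finite computation, for instance by testing representatives of the conjugacy classes of involutions in $G$. If no such $b$ exists then $(G,R,S)$ contributes nothing. Otherwise $(G,R,S)$ is antipodal, and I compute the number $k$ of involutions in $Z(G)$ and add $k+1$ to the running total.

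Finally, I must rule out over-counting across different oriented rotary maps. If $\varphi\colon(G_1,a_1,b_1,c_1)\to(G_2,a_2,b_2,c_2)$ is an isomorphism of non-orientable regular maps, then $\varphi(a_1b_1)=a_2b_2$ and $\varphi(b_1c_1)=b_2c_2$, so $\varphi$ also induces an isomorphism of the associated oriented rotary maps $(G_i,a_ib_i,b_ic_i)$. Hence non-isomorphic oriented rotary maps contribute disjoint sets of non-orientable regularisations, and the running total is exactly the number sought. The main obstacle is computational rather than conceptual: the loop over $345\,070$ triples, each requiring an antipodality search and a centre computation, demands careful implementation, but once executed it produces the claimed total of $14\,375$.
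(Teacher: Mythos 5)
Your proposal follows essentially the same route as the paper: the edge-count halving, the reduction to finding antipodal reflectors of the oriented rotary maps in the census behind Theorem~\ref{thm:mainMaps}, the count of $k+1$ regularisations per antipodal map (with $k$ the number of central involutions of $G$), and the observation that an isomorphism of non-orientable regularisations induces an isomorphism of the underlying oriented rotary maps are exactly the ingredients the paper uses. One caveat on your suggested implementation: the condition $R^b=R^{-1}$, $S^b=S^{-1}$ is not preserved when $b$ is replaced by a conjugate, so testing only one representative per conjugacy class of involutions could wrongly declare a map non-antipodal; you should instead test all involutions, or equivalently check whether the reflector automorphism $R\mapsto R^{-1}$, $S\mapsto S^{-1}$ is realised as conjugation by some involution (the set of realising elements being a single coset of $Z(G)$).
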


The number of regular maps with up to a given number of edges is shown in Figure~\ref{fig:NO}.

\begin{figure}[h]
\begin{center}
\includegraphics[scale=0.45]{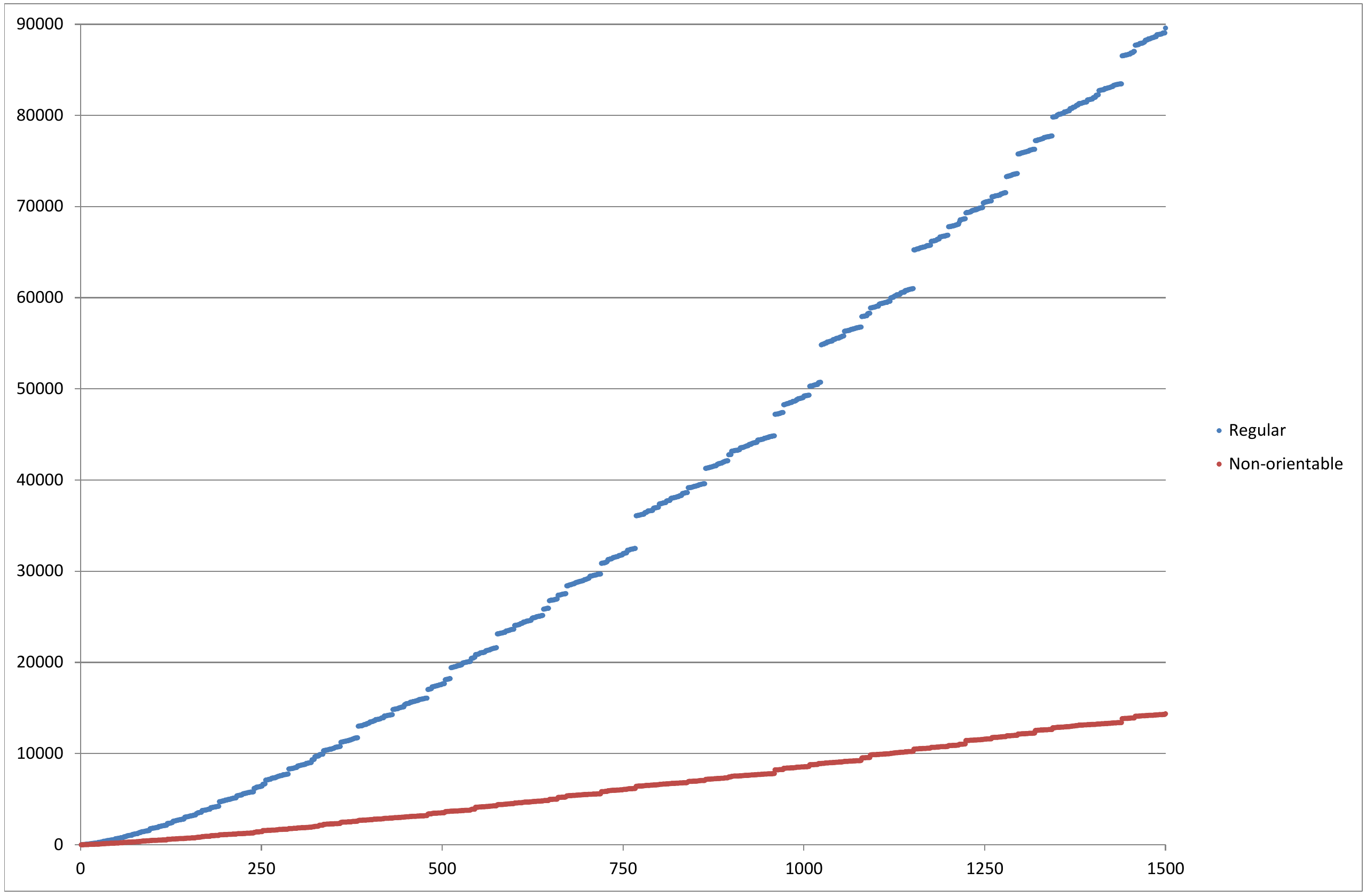}
\caption{\small{Number of all and of non-orientable regular maps with up to a given number of edges.}}
\label{fig:NO}
\end{center}
\end{figure}


\thebibliography{99}

\bibitem{katja} K.~Ber\v{c}i\v{c}, Konstrukcije in katalogizacije simetri\v{c}nih grafov, {\em doctoral dissertation}, University of Ljubljana (2015).

\bibitem{magma} W.~Bosma, J.~Cannon and C.~Playoust, The \texttt{Magma} algebra system. I: The user language, \textit{J. Symbolic Comput.} \textbf{24} (1997), 235--265. 


\bibitem{conderPage} M.~Conder, Rotary maps  on closed surfaces with up to $1000$ edges, \href{http://www.math.auckland.ac.nz/~conder/}{www.math.auckland.ac.nz/$\sim$conder/}, accessed April 2015.

\bibitem{ConWil} M.~Conder, S.\ Wilson, Inner reflectors and non-orientable regular maps, {\em Discrete Math.} {\bf 307} (2007), 367--372.

\bibitem{GNU}J.~H.~Conway, H.~Dietrich, E.~A.~O'Brien, Counting Groups: Gnus, Moas, and other Exotica, \textit{Math. Intelligencer}
\textbf{30} (2008), 6--15.




\bibitem{Firth} D.~Firth, An algorithm to find normal subgroups of a finitely presented group, up to a given finite index, Ph.D. thesis, University of Warwick, 2005.

\bibitem{GAP}  
The GAP Group, GAP---Groups, Algorithms, and Programming,  Lehrstuhl D
f\"ur Mathematik, RWTH Aachen and School of Mathematical and Computational Sciences,
University of St Andrews (2000), \href{http://www.gap-system.org}{www.gap-system.org}.


\bibitem{handbook} D.\ F.\ Holt, B.\ Eick, E.\ O'Brien, Handbook of computational Group Theory, {\em Discrete Mathematics and its applications}, CRC Press (2005).


\bibitem{JonSin} G.\ A. Jones, D.\ Singerman, Theory of maps on orientable surfaces, {\em Proc.\ London Math.\ Soc.} {\bf 37} (1978), 273--307.

\bibitem{JonSin2} G.\ A. Jones, D.\ Singerman, Maps, hypermaps and triangle groups. The Grothendieck theory of dessins d'enfants (Luminy, 1993),
 {\em London Math.\ Soc.\ Lecture Note Ser.} {\bf 200}, Cambridge Univ. Press, Cambridge (1994), 115--145.
 
\bibitem{Lub}A.~Lubotzky, Enumerating Boundedly Generated Finite Groups, \textit{J. Algebra} {\bf 238} (2001), 194--199.

\bibitem{LS}A.~Lubotzky, D.~Segal, \textit{Subgroup growth}, Progress in Mathematics 212, Birkh\"{a}user Verlag, 2003.


\bibitem{MS}T.~W.~M\"{u}ller, J.~-C.~Schlage-Puchta, Normal growth of large groups, II, \textit{Arch. Math.} \textbf{84}
(2005), 289--291.


\bibitem{cubiccensus} P.~Poto\v{c}nik, P.~Spiga, G.~Verret,  Cubic vertex-transitive graphs on up to $1280$ vertices, {\em  J.~Symbolic Comput.} {\bf 50} (2013), 465--477.

\bibitem{4HATcensus} P.~Poto\v{c}nik, P.~Spiga, G.~Verret,  A census of $4$-valent half-arc-transitive graphs and arc-transitive digraphs of valence two, {\em  Ars Math. Contemporanea} {\bf 8} (2015), 133--148. 


\bibitem{enumeration} P.~Poto\v{c}nik, P.~Spiga, G.~Verret,  Asymptotic enumeration of vertex-transitive graphs of fixed valency, \url{http://arxiv.org/abs/1210.5736}{arXiv:1210.5736 [math.CO]}

\bibitem{PotWeb} P.~Poto\v{c}nik, P.~Spiga, G.~Verret, {\em Primo\v{z} Poto\v{c}nik's home page}, \url{http://www.fmf.uni-lj.si/~potocnik/work.htm}{http://www.fmf.uni-lj.si/$\sim$potocnik/work.htm}, accessed April 2015.

\bibitem{Serre}J.~-P.~Serre, Trees, Springer-Verlag Berlin Heidelberg 1980.

\bibitem{siran} J.~\v{S}ir\'a\v{n}, Regular Maps on a Given Surface: A Survey,
{\em Topics in Discrete Mathematics Algorithms and Combinatorics} {\bf 26} (2006), 591--609 . 

\bibitem{sage} W.~A.~Stein et al., Sage Mathematics Software (Version 6.4.1), {\em The Sage Development Team} (2015) \url{http://www.sagemath.org}

\bibitem{Tits}J.~Tits, Sur le groupe des automorphismes d'un arbre, \textit{Essays on topology and related topics}, Springer New York, 1970, 188--211.


\bibitem{WilPhD}
S.\ E.\ Wilson,  {\em New Techniques For the Construction of Regular Maps}, PhD Dissertation, University of Washington (1976).

\bibitem{WilNOmaps} 
S.\ E.\ Wilson, Non-orientable regular maps, {\em Ars Combin.} {\bf 5} (1978), 213--218.





\end{document}